\newtheorem{thm}{Theorem}[section]
\newtheorem{prop}[thm]{Proposition}
\newtheorem{lem}[thm]{Lemma}
\newtheorem{rem}[thm]{Remark}
\newtheorem{cor}[thm]{Corollary}
\newcommand{\thmref}[1]{Theorem~{\rm \ref{#1}}}
\newcommand{\lemref}[1]{Lemma~{\rm \ref{#1}}}
\newcommand{\corref}[1]{Corollary~{\rm \ref{#1}}}
\newcommand{\propref}[1]{Proposition~{\rm \ref{#1}}}
\begin{document}

\section{Introduction and main result}

Let $N\ge 2$ be an integer, and let ${\mathbb{S}}^{N-1}$ denote the unit sphere in ${\mathbb{R}}^N$.
In this paper, we consider the mean-field classical $N$-vector spin models,
where each spin $\sigma_i$ is in ${\mathbb{S}}^{N-1}$, at a complete graph vertex $i$ among $n$ vertices (\cite[Chapter 9]{FV}).
The state space is $\Omega_n=({\mathbb{S}}^{N-1})^n$ with product measure $P_n=\mu\times\dots\times \mu$,
where $\mu$ is the uniform
probability measure on ${\mathbb{S}}^{N-1}$.
In the absence of an external field, each spin configuration $\sigma=(\sigma_1,\dots, \sigma_n)$ in the
state space $\Omega_n$ has a Hamiltonian defined by
\[H_n(\sigma)=-\dfrac{1}{2n}\sum_{i=1}^n \sum_{j=1}^n \langle \sigma_i , \sigma_j \rangle,\]
where $\langle \cdot , \cdot \rangle$ is the inner product in ${\mathbb{R}}^N$.
Let $\beta>0$ be the inverse temperature. 
The Gibbs measure with Hamiltonian $H_n$ is the probability measure $P_{n,\beta}$ on $\Omega_n$
with density function:
\[dP_{n,\beta}(\sigma)=\dfrac{1}{Z_{n,\beta}}\exp\left( -\beta H_{n}(\sigma)\right)dP_n(\sigma),\]
where $Z_{n,\beta}$ is the partition function:
$Z_{n,\beta}=\int_{\Omega_n}\exp\left(-\beta H_n (\sigma) \right)dP_n(\sigma).$
This model is also called the mean field $O(N)$ model. It reduces to the $XY$ model, the Heisenberg model and the Toy model
when $N=2,3,4$, respectively (see, e.g., \cite[p. 412]{FV}). 

Before proceeding, we
introduce the following notations.
 Throughout this paper, $Z$ is a 
standard normal random variable, and $\Phi(z)$ is the 
probability distribution function of $Z$.
For a real-valued function $f$, we write $\|f\|=\sup_{x}|f(x)|$.
The symbol $C$ denotes a positive constant which depends only on the inverse temperature $\beta$, and its value
may be different for each appearance. For two random variables $X$
and $Y$, the Wasserstein distance $d_{\mathrm{W}}$ and the Kolmogorov distance $d_{\mathrm{K}}$ between ${\mathcal{L}}(X)$ and ${\mathcal{L}}(Y)$ are as follows:
\[d_{\mathrm{W}}({\mathcal{L}}(X),{\mathcal{L}}(Y))=\sup_{\|h'\|\le 1}\left|Eh(X) -Eh(Y)\right|,\]
and
\[d_{\mathrm{K}}({\mathcal{L}}(X),{\mathcal{L}}(Y))= \sup_{z\in \mathbb{R}}\left|P\left(X\le z\right) -P\left(Y\le z\right)\right|.\]

In the Heisenberg model ($N=3$), Kirkpatrick and Meckes \cite{KM} established large deviation,
normal approximation results for total spin
$S_n=\sum_{i=1}^n \sigma_i$ in the
non-critical phase ($\beta\not=3$), 
 and a non-normal approximation result in the 
critical phase
($\beta=3$). The results in \cite{KM} are generalized by Kirkpatrick and Nawaz \cite{KN} 
to the mean field $N$-vector models with $N\ge 2$.

Let $I_{\nu}$ denote the modified Bessel function of the first kind
(see, e.g., \cite[p. 713]{AW}) 
and 
\begin{equation}\label{eq:add00Bessel}
f(x)=\dfrac{I_{\frac{N}{2}}(x)}{I_{\frac{N}{2}-1}(x)},\ x>0.
\end{equation}
By \lemref{le:boundf} in the Appendix, we have
\begin{equation}\label{eq:add01}
\left(\dfrac{f(x)}{x}\right)'<0 \mbox{ for all } x>0.
\end{equation}
We also have 
\begin{equation}\label{eq:add02}
\lim_{x\to 0^+}\dfrac{f(x)}{x}=\dfrac{1}{N} \mbox{ and } \lim_{x\to \infty}\dfrac{f(x)}{x}=0.
\end{equation}
In the case $\beta>N$, from \eqref{eq:add01} and \eqref{eq:add02}, there is a unique strictly positive solution $b$ to
the equation
\begin{align} \label{eq:beta}
x - \beta f(x)= 0.
\end{align}

Based on their large deviations, Kirkpatrick and Nawaz \cite{KN} 
argued 
that in the case $\beta>N$,
there exists $\varepsilon>0$ such that
\[P\left( \left| \dfrac{\beta|S_n|}{n} - b\right| \ge x \right) \le e^{-Cn {x}^2}\]
for all $0\le x\le \varepsilon$, where $S_n=\sum_{i=1}^n \sigma_i$ is total spin. It means that $|S_n|$ is 
close to $bn/\beta$ with high probability. On the other hand, 
all points on the hypersphere of radius $bn/\beta$
will have equal probability due to symmetry. 
Based on these facts, they considered
the fluctuations of the squared-length of total spin:
\begin{align} \label{eq:Wn}
W_n:= \sqrt{n}\left(\dfrac{\beta^2}{n^2 b^2} \Big|S_n \Big|^2 -1 \right),
\end{align}
where $S_n=\sum_{j=1}^n \sigma_j$. Let 
\begin{align} \label{eq:B2}
B^2=\dfrac{4\beta^2}{(1-\beta f^{'}(b))b^2}\left[1-\dfrac{(N-1)f(b)}{b}-(f(b))^2 \right].
\end{align}
Kirkpatrick and Nawaz \cite{KN} proved
that when $\beta >N$,
the bounded-Lipschitz distance between $W_n/B$ and $Z$ is bounded by $C (\log n/n)^{1/4}$.
Their proof is based on Stein's method for exchangeable pairs
(see Stein \cite{Stein86}). 
Recall that a random vector $(W,W')$ is called an exchangeable pair if $(W,W^{'})$ and $(W^{'}, W)$ have the same distribution.
Kirkpatrick and Nawaz \cite{KN} construct an exchangeable pair as follows.
Let $W_n$ be as in \eqref{eq:Wn} and let $\sigma' = \{\sigma_1',\ldots,\sigma_n'\}$, 
where for each $i$ fixed, $\sigma_i'$ is an independent copy of $\sigma_i$ given $\{\sigma_j, j\ne i\}$, i.e., given $\{\sigma_j, j\ne i\}$,
$\sigma_i'$ and $\sigma_i$ have the same distribution and $\sigma_i'$ is conditionally independent of $\sigma_i$
(see, e.g., \cite[p. 964]{EL}).
Let $I$ be a random index independent of all others and uniformly distributed over $\{1,\ldots,n\}$, and let
\begin{align} \label{eq:Wcom}
W_{n}^{'} = \sqrt{n}\left(\dfrac{\beta^2}{n^2 b^2}|S_n'|^2 - 1 \right),
\end{align}
where  $S_n' = \sum_{j=1}^n\sigma_j-\sigma_I + \sigma_I'$. 
Then $(W_n,W_{n}^{'})$ is an exchangeable pair (see Kirkpatrick and Nawaz \cite[p. 1124]{KN}, Kirkpatrick and Meckes \cite[p. 66]{KM}). 

The bound $C(\log n/n)^{1/4}$ obtained by Kirkpatrick and Nawaz \cite{KN}
is not sharp. The aim of this paper is to give
the Kolmogorov and Wasserstein distances between $W_n/B$ and $Z$ with optimal rate $C n^{-1/2}$.

The main result is the following theorem. We recall that, throughout this paper, $C$ is a positive constant which depends only on $\beta$, and its value
may be different for each appearance.
\begin{thm} \label{th:app2}
Let $\beta >N$ and $f$ be as in \eqref{eq:add00Bessel}. Let $b$ be the unique strictly positive solution to
the equation $x - \beta f(x)= 0$ and $B^2$ as in \eqref{eq:B2}.
For $W_n$ as defined in \eqref{eq:Wn}, we have
\begin{equation}\label{eq:W} \sup_{\|h'\|\le 1}\left|Eh(W_n/B) -Eh(Z)\right| \le C n^{-1/2}, \end{equation}
and
\begin{equation} \label{eq:K} \sup_{z\in \mathbb{R}}\left|P\left(W_n/B\le z\right) -\Phi(z)\right| \le C n^{-1/2}.\end{equation}
\end{thm}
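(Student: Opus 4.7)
The plan is to apply Stein's method for exchangeable pairs directly to the pair $(W_n, W_n')$ already constructed in \cite{KN}, but with a sharper version of the exchangeable-pair bound (in the spirit of Rinott--Rotar and Shao) that delivers the rate $n^{-1/2}$ in both the Wasserstein and the Kolmogorov metric. The scheme relies on three ingredients: (i) the explicit conditional law of a single spin under $P_{n,\beta}$; (ii) Taylor expansion of the Bessel ratio $f$ about $b$; and (iii) the exponential concentration of $|S_n|/n$ about $b/\beta$ already used in \cite{KN}. For (i), since the conditional density of $\sigma_i$ given $\{\sigma_j:j\ne i\}$ is proportional to $\exp(\beta\langle\sigma_i,M_i\rangle)$ with $M_i=(S_n-\sigma_i)/n$, the usual Langevin-density calculation gives
\[ E\bigl[\sigma_i\mid\{\sigma_j:j\ne i\}\bigr] \;=\; f(\beta|M_i|)\,\frac{M_i}{|M_i|}. \]

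Combined with $|S_n'|^2-|S_n|^2 = 2\langle S_n,\sigma_I'-\sigma_I\rangle + |\sigma_I'-\sigma_I|^2$, this yields, after averaging over $I$, a linear regression identity
\[ E[W_n'-W_n\mid\sigma] \;=\; -\frac{\lambda}{n}\,W_n \;+\; R_n, \]
where $\lambda>0$ is chosen so that the target variance $E[(W_n'-W_n)^2]$ matches $2\lambda B^2/n$ asymptotically; the self-consistency relation $\beta f(b)=b$ from \eqref{eq:beta} makes this possible, and $f'(b)$ enters through the Taylor expansion of $f(\beta|M_I|)$ around $b$, producing the $(1-\beta f'(b))$ factor of \eqref{eq:B2}. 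A direct calculation of $E[(W_n'-W_n)^2\mid\sigma]$ along the same lines produces the remaining $[1-(N-1)f(b)/b-f(b)^2]$ factor. The uniform bound $|\sigma_I'-\sigma_I|\le 2$ gives $|W_n'-W_n|\le Cn^{-1/2}$, hence $E|W_n'-W_n|^3\le Cn^{-3/2}$, and a similar Taylor argument will provide $\mathrm{Var}(E[(W_n'-W_n)^2\mid\sigma])=O(n^{-3})$. Feeding these into the exchangeable-pair bound gives \eqref{eq:W}; for \eqref{eq:K} one uses the Kolmogorov analogue, which requires the same ingredients together with the uniform bound just mentioned.

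The decisive difficulty, and the source of the $(\log n)^{1/4}$ loss in \cite{KN}, is the control of the remainder $R_n$ and of $\mathrm{Var}(E[(W_n'-W_n)^2\mid\sigma])$. Pointwise these quantities are dominated by $(|M_I|-b/\beta)^2$ and by higher-order residuals of the Taylor expansion of $f$, which are not $O(n^{-1})$ uniformly. The optimal rate is obtained by truncating on the concentration event $A_n=\{\bigl||S_n|/n-b/\beta\bigr|\le t_n\}$: on $A_n$ the smoothness of $f$ near $b$ (via \lemref{le:boundf} and related estimates in the Appendix) gives the desired $O(n^{-1})$ pointwise control, while the sub-Gaussian bound $P(A_n^c)\le e^{-Cnt_n^2}$ from \cite{KN} absorbs the exceptional set for $t_n=n^{-1/2}\sqrt{\log n}$. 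Carrying out this truncation argument so that the exchangeability structure is preserved, and verifying that the explicit constants reproduce $B^2$ in \eqref{eq:B2}, is where the bulk of the work lies.
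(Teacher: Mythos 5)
Your overall architecture (exchangeable pair from \cite{KN}, the linear regression identity with $\lambda = (1-\beta f'(b))/n$, Taylor expansion of the Bessel ratio about $b$, and using the uniform bound $|\Delta|\le Cn^{-1/2}$ to feed a Rinott--Rotar/Shao-type Kolmogorov bound) is the same as the paper's, but the two places where you wave your hands are exactly the places where the paper departs from Kirkpatrick--Nawaz, and your proposed treatment of the first one would not deliver the claimed rate.

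First, you control the remainder by truncating on $A_n=\{||S_n|/n-b/\beta|\le t_n\}$ with $t_n=n^{-1/2}\sqrt{\log n}$. On $A_n$ this gives $(|M_I|-b/\beta)^2\le C(\log n)/n$, so the pointwise control is $O((\log n)/n)$, not $O(n^{-1})$; that is precisely how the $\log n$ factor entered Kirkpatrick--Nawaz's argument. In particular you would only obtain $EW_n^2\le C\log n$, hence $E|W_n|\le C\sqrt{\log n}$ and the Berry--Esseen bound degrades to $C\sqrt{\log n}\, n^{-1/2}$, with further $\log$ losses in the variance term. The paper removes the log entirely by \emph{not} truncating: Lemma~\ref{le:lema1} integrates the sub-Gaussian tail directly, $E|\beta|S_n|/n-b|^2 \le 2\int_0^\varepsilon x e^{-Cnx^2}\,dx + CP(\cdot>\varepsilon)\le C/n$, with no $\log n$. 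This is an elementary but essential change in tactics.

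Second, you assert that ``a similar Taylor argument will provide $\mathrm{Var}(E[(W_n'-W_n)^2\mid\sigma])=O(n^{-3})$.'' This is the decisive step and it is not a Taylor argument. After expansion, the problematic piece is $R_4=\sum_i\bigl(\langle\sigma_i,\sigma^{(i)}\rangle^2-a\bigr)$, and the off-diagonal part of $ER_4^2$, namely $n(n-1)E\bigl[(\langle\sigma_1,\sigma^{(1)}\rangle^2-a)(\langle\sigma_2,\sigma^{(2)}\rangle^2-a)\bigr]$, is $O(n^6)$ if bounded naively, because the two factors are of size $n^2$ and do not decorrelate automatically under $P_{n,\beta}$. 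The paper defeats this with the decoupling technique of Shao--Zhang (their proof of (5.51)): one introduces the auxiliary product density $p_{12}(x,y)\propto\exp\bigl(\tfrac{\beta}{n}\langle x+y,\sigma^{(1,2)}\rangle\bigr)$, under which the replacement spins $\xi_1,\xi_2$ are conditionally independent given $(\sigma_j)_{j>2}$ so the decoupled cross covariance vanishes, and shows that swapping $(\sigma_1,\sigma_2)$ for $(\xi_1,\xi_2)$ costs only $O(n^3)$ per term. Together with Lemma~\ref{le:boundV}'s explicit formula for the conditional second moment (which again feeds into Lemma~\ref{le:lema1}), this yields $ER_4^2\le Cn^5$, hence $E|R_4|\le Cn^{5/2}$, which is what produces $n^{-1/2}$ after multiplying by the $n^{-3}$ prefactor. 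Your proposal does not contain this idea, and Taylor expansion alone cannot replace it.
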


The Wasserstein bound in Theorem \ref{th:app2} will be a consequence of the following proposition,
a version of Stein's method for exchangeable pairs.
It is a special case of
Theorem 2.4 of Eichelsbacher and L\"{o}we \cite{EL} or Theorem 13.1 in \cite{CGS}.
\begin{prop} \label{pr:EL}
Let $(W,W^{'})$ be an exchangeable pair and $\Delta=W-W'$.
If  $E(\Delta|W) = \lambda(W + R)$
for some random variable $R$ and $0 < \lambda <1$, then
\[ \sup_{\|h'\|\le 1}\left|Eh(W) -Eh(Z)\right| \le \sqrt{2/\pi}E\left| 1- \dfrac{1}{2\lambda}E(\Delta^2|W)\right|+\dfrac{1}{2\lambda}E|\Delta|^3+2E|R|.\]
\end{prop}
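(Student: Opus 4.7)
The plan is to run the classical exchangeable-pairs Stein argument. Fix a test function $h$ with $\|h'\|\le 1$ and let $f=f_h$ denote the bounded solution of the Stein equation
\[ f'(w) - w f(w) = h(w) - Eh(Z). \]
Standard Stein estimates (e.g.\ Chen--Goldstein--Shao, Chapter 2) furnish the uniform bounds $\|f\|\le 2$, $\|f'\|\le\sqrt{2/\pi}$, and $\|f''\|\le 2$. Since $Eh(W)-Eh(Z)=E[f'(W)-Wf(W)]$, the task reduces to estimating this quantity.

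The heart of the proof exploits the antisymmetry granted by exchangeability. Any function $F(w,w')$ satisfying $F(w,w')=-F(w',w)$ has mean zero under $\mathcal{L}(W,W')$. Applying this to $F(w,w')=(w-w')(f(w)+f(w'))$ produces the key identity
\[ E[\Delta f(W)] = \tfrac{1}{2}\, E\bigl[\Delta(f(W)-f(W'))\bigr], \]
with $\Delta = W-W'$. Taylor expanding $f(W')=f(W)-f'(W)\Delta+\tfrac{1}{2} f''(\xi)\Delta^{2}$ with $\xi$ between $W$ and $W'$, and invoking the tower property to pull $f'(W)$ against $E(\Delta^2\mid W)$, this becomes
\[ E[\Delta f(W)] = \tfrac{1}{2}\, E\bigl[f'(W)\, E(\Delta^2\mid W)\bigr] - \tfrac{1}{4}\, E\bigl[f''(\xi)\Delta^3\bigr]. \]

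Next, I would use the hypothesis $E(\Delta\mid W)=\lambda(W+R)$ to rewrite the left-hand side as $\lambda E[Wf(W)]+\lambda E[Rf(W)]$. Solving for $E[Wf(W)]$ and substituting into the Stein identity yields
\[ Eh(W)-Eh(Z) = E\!\left[f'(W)\Bigl(1-\tfrac{1}{2\lambda}E(\Delta^2\mid W)\Bigr)\right] + \tfrac{1}{4\lambda}\, E\bigl[f''(\xi)\Delta^3\bigr] + E[Rf(W)]. \]
Taking absolute values and inserting the three Stein bounds gives exactly
\[ |Eh(W)-Eh(Z)|\le \sqrt{2/\pi}\, E\Bigl|1-\tfrac{1}{2\lambda}E(\Delta^2\mid W)\Bigr| + \tfrac{1}{2\lambda}\, E|\Delta|^3 + 2E|R|, \]
which is the claimed inequality.

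The algebraic identity and the Taylor step are routine once $f_h$ is in hand, so the only non-trivial ingredient is the uniform Stein estimate $\|f''\|\le 2$ for Lipschitz test functions, which is by now classical. Accordingly, I anticipate no substantive obstacle; the main care required is bookkeeping when conditioning on $W$ inside the remainder term $E[f''(\xi)\Delta^3]$, where strictly speaking $\xi$ depends on $(W,W')$ and only $\|f''\|_\infty$ can be used to bound it uniformly.
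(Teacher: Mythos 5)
Your proposal is correct and follows essentially the same route as the paper: the antisymmetry identity from exchangeability, the linearity condition $E(\Delta\mid W)=\lambda(W+R)$ to isolate $E[Wf(W)]$, a Taylor expansion of $f(W')$ around $W$ to produce the $E(\Delta^2\mid W)$ term and the cubic remainder, and finally the standard Stein bounds $\|f\|\le 2$, $\|f'\|\le\sqrt{2/\pi}$, $\|f''\|\le 2$. The only cosmetic difference is that you write the Taylor remainder explicitly as $\tfrac{1}{2}f''(\xi)\Delta^2$ while the paper keeps it in the form $f(W')-f(W)+\Delta f'(W)$ before bounding; these are the same step.
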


The Kolmogorov distance 
is more commonly used in probability and statistics, 
and is usually more difficult to handle than the Wasserstein distance.
Recently, Shao and Zhang \cite{SZ}  proved a very general theorem. Their result is as follows.
\begin{prop} \label{pr:ShaoZhang}
Let $(W,W^{'})$ be an exchangeable pair and $\Delta=W-W'$. 
Let $\Delta^*:=\Delta^*(W,W')$ be any random variable satisfying $\Delta^*(W,W')=\Delta^*(W',W)$ and $\Delta^*\ge |\Delta|$.
If
$E(\Delta|W) = \lambda(W + R)$
for some random variable $R$ and $0 < \lambda <1$, then
\[\sup_{z\in\mathbb{R}}\left|P(W\le z) - \Phi(z)\right| \le E\left| 1- \dfrac{1}{2\lambda}E(\Delta^2|W)\right|+\dfrac{1}{\lambda}E\left|E(\Delta \Delta^*|W)\right|+E|R|.\]
\end{prop}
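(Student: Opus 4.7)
The plan is to prove the bound by Stein's method applied to the exchangeable pair $(W,W')$, adapted to the non-smooth test function $\mathbf{1}_{\{\,\cdot\,\le z\}}$ that governs the Kolmogorov distance. Fix $z\in\mathbb{R}$ and let $f_z$ be the standard bounded solution of the Stein equation
\[ f'(w) - w f(w) = \mathbf{1}_{\{w\le z\}} - \Phi(z),\]
for which $\|f_z\|\le 1$, $\|f_z'\|\le 1$, and $w\mapsto w f_z(w)$ is $1$-Lipschitz. Then $P(W\le z)-\Phi(z) = E[f_z'(W) - W f_z(W)]$. Using the regression hypothesis $E(\Delta\mid W)=\lambda(W+R)$ to replace $W$, and the exchangeability of $(W,W')$ to symmetrize $E[\Delta f_z(W)] = \tfrac12 E[\Delta(f_z(W)-f_z(W'))]$, the fundamental theorem of calculus gives
\[ E[W f_z(W)] = \tfrac{1}{2\lambda}\,E\!\left[\Delta\int_0^{\Delta}\! f_z'(W-t)\,dt\right] - E[R f_z(W)].\]

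Next I would use the Stein equation itself to rewrite
\[ f_z'(W-t) = f_z'(W) + \bigl[(W-t)f_z(W-t)-W f_z(W)\bigr] + \bigl[\mathbf{1}_{\{W-t\le z\}}-\mathbf{1}_{\{W\le z\}}\bigr]\]
and split the integral accordingly. The leading piece $\tfrac{1}{2\lambda}E[\Delta^2 f_z'(W)]$, together with the $E[f_z'(W)]$ term and $\|f_z'\|\le 1$, produces the contribution $E\bigl|1-\tfrac{1}{2\lambda}E(\Delta^2\mid W)\bigr|$, while $E[R f_z(W)]$ is bounded by $E|R|$ via $\|f_z\|\le 1$. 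The ``smooth gap'' contribution is controlled by the $1$-Lipschitz bound $|(W-t)f_z(W-t)-W f_z(W)|\le |t|$ and is of lower order; using $|t|\le|\Delta|\le\Delta^{*}$ it is absorbed into the two remaining error terms.

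The heart of the argument, and the main obstacle, is the indicator remainder. The elementary pointwise inequality $|\mathbf{1}_{\{W-t\le z\}}-\mathbf{1}_{\{W\le z\}}|\le \mathbf{1}_{\{|W-z|\le|t|\}}$, together with $|t|\le|\Delta|\le\Delta^{*}$, yields
\[ \left|\,\tfrac{1}{2\lambda}E\!\left[\Delta\int_0^{\Delta}\!\bigl(\mathbf{1}_{\{W-t\le z\}}-\mathbf{1}_{\{W\le z\}}\bigr)\,dt\right]\right|\le \tfrac{1}{2\lambda}\,E\bigl[\,|\Delta|\,\Delta^{*}\,\mathbf{1}_{\{|W-z|\le\Delta^{*}\}}\,\bigr].\]
The hard part will be converting this concentration-type quantity into the clean bound $\tfrac{1}{\lambda}E|E(\Delta\Delta^{*}\mid W)|$ that appears in the statement. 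The plan here is to exploit the symmetry $\Delta^{*}(W,W')=\Delta^{*}(W',W)$: writing $\mathbf{1}_{\{|W-z|\le\Delta^{*}\}}$ as the difference of two indicators of half-lines with endpoints $z\pm\Delta^{*}$, and applying a second symmetrization via $(W,W')\sim(W',W)$ together with the regression identity a second time, trades the indicator of $W$ for another factor of $\Delta$, after which conditioning on $W$ collapses the expression onto $E(\Delta\Delta^{*}\mid W)$. Collecting the three contributions and taking the supremum over $z$ yields the claimed Kolmogorov bound.
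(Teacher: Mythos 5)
The paper does not prove \propref{pr:ShaoZhang}: it is stated as Shao and Zhang's result and supported solely by the citation to \cite{SZ}, so there is no in-paper proof to compare your attempt against. I can only assess your sketch on its own terms, and it has genuine gaps precisely at the places where the Shao--Zhang argument does its real work.

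First, the regularity claim you lean on is false. The solution $f_z$ of the Stein equation satisfies $\|f_z\|\le\sqrt{2\pi}/4$, $\|f_z'\|\le 1$, and the map $w\mapsto wf_z(w)$ has \emph{oscillation} bounded by $1$, but it is \emph{not} globally $1$-Lipschitz; its derivative $f_z(w)+wf_z'(w)$ grows like $|w|$ near $w=z$ for large $|z|$, and the sharp local estimate (cf. Chen--Goldstein--Shao, Lemma~2.3) is of the form $|(w+u)f_z(w+u)-(w+v)f_z(w+v)|\le(|w|+\sqrt{2\pi}/4)|u-v|$. With the correct bound, your ``smooth gap'' term is of order $\frac{1}{\lambda}E\bigl[(|W|+C)|\Delta|^3\bigr]$, not $\frac{1}{4\lambda}E|\Delta|^3$. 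Second, even if it were $\frac{1}{4\lambda}E|\Delta|^3$, this is not ``of lower order'' and is not absorbed into the claimed bound: $\frac{1}{\lambda}E\bigl|E(\Delta\Delta^*\mid W)\bigr|$ is a conditional expectation of the \emph{signed} quantity $\Delta\Delta^*$ and can be small through cancellation, whereas a cubic moment cannot; indeed, removing the cubic error term of \propref{pr:EL} is exactly the point of the Shao--Zhang bound, so any decomposition that reintroduces it cannot reproduce the stated inequality. Third, the step you flag as ``the hard part'' --- converting $\frac{1}{2\lambda}E\bigl[|\Delta|\Delta^*\mathbf{1}_{\{|W-z|\le\Delta^*\}}\bigr]$ into $\frac{1}{\lambda}E\bigl|E(\Delta\Delta^*\mid W)\bigr|$ --- is left as a vague plan. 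Symmetry of $\Delta^*$ and a second symmetrization do not simply ``trade the indicator for another factor of $\Delta$'': the event $\{|W-z|\le\Delta^*\}$ becomes $\{|W'-z|\le\Delta^*\}$ under the swap, and there is no algebraic identity collapsing this onto $E(\Delta\Delta^*\mid W)$. In Shao and Zhang's proof this is handled by a separate randomized concentration inequality, which is the technical heart of their theorem; without establishing something of that strength, your argument does not close.
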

Shao and Zhang \cite{SZ} applied their bound in \propref{pr:ShaoZhang} to get optimal bound
in many problems, including a bound of $O(n^{-1/2})$ for the Kolmogorov distance in 
normal approximation of total spin in the Heisenberg model. 
We note that
if $|\Delta|\le a$, then the following result is an immediate corollary
of \propref{pr:ShaoZhang}. In this case, the bound is much simpler than that of \propref{pr:ShaoZhang}.
\begin{cor} \label{co:cor01}
If $|\Delta|\le a$, then
\begin{equation}\label{eq:corSZ01}
\sup_{z\in\mathbb{R}}\left|P(W\le z) - \Phi(z)\right| \le E\left| 1- \dfrac{1}{2\lambda}E(\Delta^2|W)\right|
+(E|W|+1)a+E|R|.
\end{equation}
\end{cor}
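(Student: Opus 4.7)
The plan is to apply \propref{pr:ShaoZhang} with the simplest deterministic choice of $\Delta^*$ consistent with its hypotheses, namely $\Delta^* \equiv a$. This choice is immediately valid: the symmetry $\Delta^*(W,W') = \Delta^*(W',W)$ is automatic because $\Delta^*$ is constant, and the dominance $\Delta^* \ge |\Delta|$ is precisely the hypothesis of the corollary.

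With this choice $\Delta \Delta^* = a\Delta$, so pulling the constant out of the conditional expectation and invoking the linear regression identity $E(\Delta \mid W) = \lambda(W+R)$ yields
\[
E(\Delta\Delta^* \mid W) \;=\; a\, E(\Delta \mid W) \;=\; a\lambda(W+R).
\]
Consequently the middle term of \propref{pr:ShaoZhang} reduces to
\[
\frac{1}{\lambda}\, E\bigl|E(\Delta\Delta^*\mid W)\bigr| \;=\; a\, E|W+R|,
\]
which the triangle inequality $E|W+R| \le E|W| + E|R|$ bounds by $aE|W| + aE|R|$. Combining this with the outer two terms of \propref{pr:ShaoZhang} and rearranging then gives \eqref{eq:corSZ01}.

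No substantive obstacle is anticipated: the entire argument is a one-line specialization of \propref{pr:ShaoZhang}. The value of the corollary lies not in the proof but in the simplified form of its right-hand side. Under the a.s.\ boundedness of $\Delta$, one entirely sidesteps the delicate construction of a nonconstant symmetric dominating variable $\Delta^*$ required in the general case, leaving a bound involving only $a$, $E|W|$, $E|R|$, and the usual variance-proxy term $E|1 - \tfrac{1}{2\lambda}E(\Delta^2\mid W)|$. This is exactly the form that will be directly applicable in the proof of \thmref{th:app2}, where $\Delta$ will be $O(n^{-1/2})$ deterministically and $E|R|$ can be controlled independently.
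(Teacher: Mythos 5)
Your specialization $\Delta^* \equiv a$ and the reduction of the middle term of \propref{pr:ShaoZhang} to $a\,E|W+R| \le a(E|W| + E|R|)$ are exactly the paper's moves. But the final step --- ``combining \ldots and rearranging then gives \eqref{eq:corSZ01}'' --- glosses over a real gap. After substituting your estimate into \propref{pr:ShaoZhang}, the bound you actually obtain is
\[
E\left| 1- \tfrac{1}{2\lambda}E(\Delta^2|W)\right| + aE|W| + aE|R| + E|R|,
\]
whereas \eqref{eq:corSZ01} requires the term $aE|W| + a$ in place of $aE|W| + aE|R|$. These coincide only when $E|R| \le 1$; if $E|R| > 1$, your derived bound is strictly larger than the claimed one, and no ``rearranging'' closes the gap.

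The paper resolves this with a small case split that you omit: if $E|R| \ge 1$, the right-hand side of \eqref{eq:corSZ01} already exceeds $1$, which trivially dominates the Kolmogorov distance (a difference of probabilities, hence at most $1$); if $E|R| < 1$, then $aE|R| < a$ and your computation delivers \eqref{eq:corSZ01} directly. You should add this dichotomy --- without it, the argument is incomplete in the regime $E|R| \ge 1$, even though that regime is uninteresting for the eventual application.
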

\begin{proof}
In \propref{pr:ShaoZhang}, let $\Delta^*=a$, then
\begin{equation}\label{eq:corSZ02}
E\left|E(\Delta \Delta^*|W)\right|=aE\left|E(\Delta|W)\right|\le a\lambda (E|W|+E|R|).
\end{equation}
If $E|R|\ge 1$, then \eqref{eq:corSZ01} is trivial. If $E|R|<1$, then \eqref{eq:corSZ01} follows
immediately from \eqref{eq:corSZ02} and \propref{pr:ShaoZhang}.
\end{proof}
For $S_n=\sum_{i=1}^n \sigma_i$, and for $W_n$ and $W_{n}^{'}$ respectively defined in \eqref{eq:Wn} and \eqref{eq:Wcom}, 
we have
\[|\Delta|=|W_n-W_{n}^{'}|=\dfrac{\beta^2}{n^{3/2} b^2}\left||S_n|^2-|S_{n}^{'}|^2\right|\le \dfrac{4\beta^2}{n^{1/2} b^2},\]
since $|S_n|+|S_{n}^{'}|\le 2n$ and $|S_n|-|S_{n}^{'}|\le |\sigma_I-\sigma_{I}^{'}|\le 2.$
Therefore, we will apply \corref{co:cor01} to obtain the Kolmogorov bound in \thmref{th:app2}.

\section{Proof of the main result}

The proof of \thmref{th:app2} depends on Kirkpatrick and Nawaz's finding \cite{KN}.
Applying \propref{pr:EL} and \corref{co:cor01}, Theorem \ref{th:app2} follows from the following proposition.
\begin{prop} \label{pr:prop}
Let $\beta >N$, and let $f$ be as in \eqref{eq:add00Bessel},  $b$ the unique strictly positive solution to
the equation $x - \beta f(x)= 0$. Let $W_n$ and $W_{n}^{'}$ be as in \eqref{eq:Wn} and \eqref{eq:Wcom}, respectively.
Then the following statements hold:
\begin{description}
\item{(i)} $|W_{n}-W_{n}^{'}|\le 4\beta^2 b^{-2} n^{-1/2}$ and $EW_{n}^2\le C,$

\item{(ii)} $
E(W_n-W_{n}^{'}|W_n) = \lambda (W_n + R),$
where $\lambda = \dfrac{1-\beta f'(b)}{n}$ and $R$ is a random variable satisfying  $E|R| \le C  n^{-1/2}$,
\item{(iii)} $E\Big|\dfrac{1}{2\lambda}E((W_n-W_{n}^{'})^2|W_n))-B^2 \Big| \le C n^{-1/2},$
where $B^2$ is defined in \eqref{eq:B2}. 
\end{description}
 \end{prop}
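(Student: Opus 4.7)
The plan is to unify the analysis by setting $T := |S_n|$, $u := \beta T/n$, and $Y := u - b$; note $W_n = \sqrt n\,(u-b)(u+b)/b^2$. The essential analytic input is the concentration estimate of Kirkpatrick and Nawaz, $P(|Y|\ge x) \le e^{-Cnx^2}$ for $0 \le x \le \varepsilon$, which together with the a priori bound $|u|\le\beta$ delivers moment estimates $E|Y|^k \le C_k n^{-k/2}$ for every fixed $k\ge 1$. These will absorb all lower-order terms into the $R$ of part (ii) and into the $O(n^{-1/2})$ errors of parts (i) and (iii). The other standing tool is the mean-field conditional identity: given $\{\sigma_j:j\ne i\}$, the law of $\sigma_i$ on $\mathbb{S}^{N-1}$ has density proportional to $\exp(\tfrac{\beta}{n}\langle x, S_n^{(i)}\rangle)$ with $S_n^{(i)}:=S_n-\sigma_i$, so that
\[
E[\sigma_i\mid\sigma_{\ne i}] = \frac{S_n^{(i)}}{|S_n^{(i)}|}\,f\!\Bigl(\tfrac{\beta}{n}|S_n^{(i)}|\Bigr),
\]
and its conditional covariance is a longitudinal variance $f'(|h^{(i)}|)$ along $S_n^{(i)}$ plus a transverse variance $\tfrac{1-f'(|h^{(i)}|)-f^2(|h^{(i)}|)}{N-1}$ in the orthogonal complement (standard derivative identities $\log Z'=f$ and $\log Z''=f'$ for the von Mises--Fisher partition function, with $|h^{(i)}|=\tfrac{\beta}{n}|S_n^{(i)}|$).

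Part (i) splits into the deterministic bound $\bigl||S_n|^2-|S_n'|^2\bigr|\le 4n$, which is the two-line estimate displayed just before Proposition 2.3, and the $L^2$ bound $EW_n^2\le C$, following from $|W_n|\le C\sqrt n\,|Y|$ and $EY^2\le C/n$. For (ii) I decompose
\[
|S_n|^2-|S_n'|^2 = 2\langle S_n,\sigma_I-\sigma_I'\rangle - |\sigma_I-\sigma_I'|^2,
\]
take $E[\cdot\mid\sigma]$, and apply the conditional-expectation formula above. A first-order Taylor expansion, $|S_n^{(i)}| = T - \langle\hat S,\sigma_i\rangle + O(T^{-1})$ and $\hat S^{(i)}-\hat S = -T^{-1}(I-\hat S\hat S^{\top})\sigma_i + O(T^{-2})$, together with $\sum_i\sigma_i = S_n$, collapses the first-order corrections and yields
\[
E\bigl[|S_n|^2-|S_n'|^2\mid\sigma\bigr] = \frac{2nu\bigl(u-\beta f(u)\bigr)}{\beta^2} + \mathcal E_1,\qquad |\mathcal E_1|\le C.
\]
Multiplying by $\beta^2/(n^{3/2}b^2)$, expanding $u-\beta f(u)=(1-\beta f'(b))Y+O(Y^2)$, and noting $\lambda W_n = (1-\beta f'(b))Y(u+b)/(\sqrt n\,b^2)$, the discrepancy between $E[W_n-W_n'\mid\sigma]$ and $\lambda W_n$ is pointwise of order $Y^2/\sqrt n + n^{-3/2}$, so $E|R|\le Cn^{-1/2}$ after dividing by $\lambda$.

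For (iii), expand $(|S_n|^2-|S_n'|^2)^2 = 4\langle S_n,\sigma_I-\sigma_I'\rangle^2 + (\text{lower order})$; the cross and quartic terms contribute only $O(n^{-2})$ to $E[(W_n-W_n')^2\mid\sigma]$ and hence $O(n^{-1})$ after dividing by $2\lambda$. For the main term, conditional independence of $\sigma_i,\sigma_i'$ given $\sigma_{\ne i}$ yields
\[
E\bigl[\langle S_n,\sigma_i-\sigma_i'\rangle^2\mid\sigma\bigr] = \bigl(\langle S_n,\sigma_i\rangle-\langle S_n,m_i\rangle\bigr)^2 + S_n^{\top}\Sigma^{(i)}S_n,
\]
with $m_i=E[\sigma_i'\mid\sigma_{\ne i}]$. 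The longitudinal piece of $S_n^{\top}\Sigma^{(i)}S_n$ is $T^2 f'(u)$ to leading order; after averaging in $i$, the bias term also reduces to $T^2 f'(u)$ via $\sum_i\langle\hat S,\sigma_i\rangle=T$ and the empirical approximation $\tfrac{1}{n}\sum_i\langle\hat S,\sigma_i\rangle^2 = f'(u)+f^2(u)+O_{L^1}(n^{-1/2})$. Combining and dividing by $2\lambda$,
\[
\frac{1}{2\lambda}E\bigl[(W_n-W_n')^2\mid\sigma\bigr] = \frac{4\beta^2 u^2 f'(u)}{(1-\beta f'(b))b^4}+O_{L^1}(n^{-1/2}).
\]
Evaluating at $u=b$ and using the Bessel identity $f'(b) = 1-(N-1)f(b)/b-f^2(b)$ (derived from the recurrence $I_{\nu-1}(x)-I_{\nu+1}(x)=2\nu x^{-1}I_\nu(x)$) identifies the leading constant as $B^2$; the Taylor remainder $u^2f'(u)-b^2f'(b)=O(|Y|)$ contributes the final $O(n^{-1/2})$ in $L^1$.

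The main obstacle is the empirical approximation $\tfrac{1}{n}\sum_i\langle\hat S,\sigma_i\rangle^2\approx f'(u)+f^2(u)$: this quadratic functional of the spins is not directly controlled by the scalar concentration of Kirkpatrick and Nawaz and requires a separate second-moment (or auxiliary exchangeable-pair) argument within the Gibbs measure, and this is where the bulk of the technical work should concentrate.
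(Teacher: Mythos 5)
Your parts (i) and (ii) are essentially correct and follow the same route as the paper, though you re-derive from first principles what the paper imports from Kirkpatrick--Nawaz (in particular their equation (9) for $E(W_n-W_n'\mid W_n)$): the moment bound $E|Y|^k\le C_k n^{-k/2}$ from the Gaussian tail estimate and the a priori bound is the same input as Lemma A.1, the conditional-mean formula $E[\sigma_i\mid\sigma_{\ne i}]=f(b_i)\,\sigma^{(i)}/|\sigma^{(i)}|$ and the Taylor expansion of $xf(x)$ at $b$ are the same as in the paper's part (ii), and your identification of $B^2$ via the Bessel identity $f'(b)=1-(N-1)f(b)/b-f^2(b)$ matches the paper's \eqref{eq:ine01}. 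So far this is a legitimate, if somewhat more terse, version of the same argument.

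Part (iii) has a genuine gap, and you name it yourself: you reduce everything to the empirical second moment $\tfrac1n\sum_i\langle\hat S,\sigma_i\rangle^2$ (equivalently, the paper's $R_4=\sum_i(\langle\sigma_i,\sigma^{(i)}\rangle^2-a)$) and then state that controlling it ``requires a separate second-moment (or auxiliary exchangeable-pair) argument within the Gibbs measure, and this is where the bulk of the technical work should concentrate.'' That is exactly right, and it is exactly what you have not done. This term cannot be handled by the scalar concentration of $|S_n|$; a first-moment bound on $R_4$ only gives an error of the size obtained by Kirkpatrick--Nawaz, namely $(\log n/n)^{1/4}$. The paper closes this gap by a second-moment estimate $ER_4^2\le Cn^5$, which in turn uses the decoupling device of Shao and Zhang: after conditioning on $(\sigma_j)_{j>2}$, replace the Gibbs-conditional pair $(\sigma_1,\sigma_2)$ by an i.i.d.\ pair $(\xi_1,\xi_2)$ drawn from the product density $p_{12}(x,y)\propto\exp(\tfrac\beta n\langle x+y,\sigma^{(1,2)}\rangle)$ at the cost of a controllable correction, exploit the conditional independence of $\xi_1,\xi_2$ to kill the leading cross term $R_{41}$ \eqref{eq:R41b}, and evaluate the remaining $R_{42}$ via the explicit formula for $\tilde V_i$ in Lemma A.3. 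Without this (or an equivalent variance bound for the quadratic functional of the spins), the conclusion $E|R_4|\le Cn^{5/2}$, and hence the $n^{-1/2}$ rate, is unproved.

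One secondary caution: your claim that after the main term $4\langle S_n,\sigma_I-\sigma_I'\rangle^2$ the remaining cross and quartic terms contribute only $O(n^{-1})$ to $\tfrac1{2\lambda}E[(W_n-W_n')^2\mid\sigma]$ is plausible under the naive size bound, but the ``main term'' itself, once the conditional mean and covariance are substituted, generates the paper's $R_2$, $R_3$, $R_5$ terms (the $|\sigma^{(i)}|\langle\sigma_i,\sigma^{(i)}\rangle$ cross-moment, the $|\sigma^{(i)}|^2$ term, and the residual operator $R_i'$), each of which enters at order $n^{-1/2}$ and needs a separate first-moment estimate — in the paper's case via the Lipschitz bounds on $f$ and $f(x)/x$ of Lemma A.2. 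These are not optional lower-order corrections; they have to be exhibited and bounded explicitly.
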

\begin{rem} \label{re:diff}
 {\rm 
Kirkpatrick and Nawaz's \cite{KN} used their large deviation result for total spin $S_n$ to prove that $EW_{n}^2\le C \log n.$
Intuitively, we see that this bound
would be improved to $EW_{n}^2\le C$
since $W_n$ approximates a normal distribution.
By a more careful estimate, we can prove that
$E\left(\beta |S_n|/n-b\right)^2\le C/n$ (see \lemref{le:lema1}).
This will lead to desired bound $EW_{n}^2\le C$.
Kirkpatrick and Nawaz's \cite{KN} also
proved that 
\[E\Big|\dfrac{1}{2\lambda}E((W_n-W_{n}^{'})^2|W_n))-B^2 \Big| \le C \left(\dfrac{\log n}{n}\right)^{1/4}.\]
To get optimal bound of order $n^{-1/2}$ for this term, 
we use a fine estimate of function 
$f(x)=I_{\frac{N}{2}}(x)/I_{\frac{N}{2}-1}(x)$ (\lemref{le:boundf}) and a technique developed recently by Shao and Zhang \cite[Proof of (5.51)]{SZ}.
}
 \end{rem}

\begin{proof}[Proof of \propref{pr:prop}]
(i) We have
\begin{align*}
\begin{split}
|W_{n}-W_{n}^{'}|&=\dfrac{\beta^2}{b^2 n^{3/2}}\left| |S_{n}|^2-|S_{n}^{'}|^2\right|=\dfrac{\beta^2}{b^2 n^{3/2}}\left|\langle S_{n}+S_{n}^{'},S_{n}-S_{n}^{'}\rangle \right|\\
&\le \dfrac{2\beta^2n|S_{n}-S_{n}^{'}|}{b^2 n^{3/2}}= \dfrac{2\beta^2|\sigma_I - \sigma_I'|}{b^2 n^{1/2}}\le \dfrac{4\beta^2}{b^2 n^{1/2}}.
\end{split}
\end{align*}
The proof of the first half of (i) is completed. Now, apply \lemref{le:lema1} given in the Appendix, we have
\begin{align*}
\begin{split}
EW_{n}^2&=nE\left(\left(\dfrac{\beta|S_n|}{nb}+1\right)\left(\dfrac{\beta|S_n|}{nb}-1\right)\right)^2\le CnE\left(\dfrac{\beta|S_n|}{nb}-1\right)^2\le C.\\
\end{split}
\end{align*}

(ii) Kirkpatrick and Nawaz \cite[ equation (9)]{KN} showed that
\begin{equation}\label{eq:bound01}
E(W_n-W_{n}^{'}|W_n) = \dfrac{2}{n}W_n +\dfrac{2}{\sqrt{n}} - \dfrac{2\beta}{n^{1/2}b^2}\left(\dfrac{\beta|S_n|}{n}\right)f\left(\dfrac{\beta|S_n|}{n}\right) + R_1,
\end{equation}
where $R_1$ is a random variable satisying $E|R_1| \le C n^{-3/2}$. Set $g(x)=xf(x),x>0$. By Taylor's expansion, we have for some
positive random variable $\xi$:
\begin{equation}\label{eq:bound02}
g\left(\dfrac{\beta|S_n|}{n}\right) =
	g(b) + g'(b)\left(\dfrac{\beta|S_n|}{n}-b\right) +\dfrac{g''(\xi)}{2} \left(\dfrac{\beta|S_n|}{n}-b\right)^2.
\end{equation}
Set 
$V=\dfrac{\beta|S_n|}{nb}+1,$
we have $1\le V\le C$ and
\begin{equation}\label{eq:bound03}
\begin{split}
\dfrac{\beta|S_n|}{n}-b&=b\left(\dfrac{\beta|S_n|}{nb}-1\right)=\dfrac{bW_n}{\sqrt{n}V}
=\dfrac{bW_n}{2\sqrt{n}}-\dfrac{bW_n}{\sqrt{n}}\left(\dfrac{1}{2}-\dfrac{1}{V}\right)\\
&=\dfrac{bW_n}{2\sqrt{n}}-\dfrac{bW_n}{2\sqrt{n}V}\left(\dfrac{\beta|S_n|}{nb}-1\right)
=\dfrac{bW_n}{2\sqrt{n}}-\dfrac{bW_{n}^2}{2nV^2}.
\end{split}
\end{equation}
Combining \eqref{eq:bound01}-\eqref{eq:bound03} and noting that $b=\beta f(b)$, we have
\begin{align*}
&E(W_n-W_{n}^{'}|W_n)\\
 &= \dfrac{2W_n}{n} +\dfrac{2}{\sqrt{n}} +R_1 - \dfrac{2\beta}{n^{1/2}b^2} \left(g(b) 
+ g'(b)\left( \dfrac{bW_n}{2\sqrt{n}}-\dfrac{bW_{n}^2}{2nV^2}\right)+\dfrac{g''(\xi)}{2} \left(\dfrac{\beta|S_n|}{n}-b\right)^2\right) \\
&= \dfrac{2W_n}{n} +\dfrac{2}{\sqrt{n}} +R_1-\dfrac{2\beta}{n^{1/2}b^2} \left(\dfrac{b^2}{\beta} + \left(\dfrac{b}{\beta}+bf'(b) \right)\left( \dfrac{bW_n}{2\sqrt{n}}-\dfrac{bW_{n}^2}{2nV^2}\right)+\dfrac{g''(\xi)b^2W_{n}^2}{2nV^2}\right) \\
&= \dfrac{1-\beta f'(b)}{n}(W_n+R),
\end{align*}
where 
\[R=\dfrac{n}{1-\beta f'(b)}\left(R_1+\dfrac{\beta W_{n}^2}{n^{3/2 }V^2} \left(\dfrac{1}{\beta}+f'(b) -g''(\xi)\right)\right).\]
By \lemref{le:boundf} (ii), we have $|g''(\xi)|<6$.
Since $V\ge 1$, $EW_{n}^2\le C$ and $E|R_1|\le Cn^{-3/2}$, we conclude that $E|R|\le Cn^{-1/2}$.
The proof of (ii) is completed.

(iii) Denote $Id$ is the $n\times n$ identity matrix and set
$\sigma^{(i)}=S_n-\sigma_i,\ b_i = \beta |\sigma^{(i)}|/n,\ r_i=\dfrac{\sigma^{(i)}}{|\sigma^{(i)}|}.$
From Kirkpatrick and Nawaz \cite[Equations (11) and (12)]{KN}, we have
\begin{align*}
E((W_n-W_{n}^{'})^2|\sigma)&= 2\lambda B^2 + \dfrac{4\beta^4}{n^4 b^4}  \sum_{i=1}^n\left(1-\dfrac{N-1}{\beta}\right)\left(|\sigma^{(i)}|^2 - \dfrac{(n-1)^2 b^2}{\beta^2} \right) \\
&\quad  - \dfrac{8\beta^3}{n^4b^3} \sum_{i=1}^n \left(|\sigma^{(i)}|\langle \sigma_i,\sigma^{(i)}\rangle- \dfrac{n^2 b^3}{\beta^3}\right)\\
&\quad + \dfrac{4\beta^4}{n^4 b^4} \sum_{i=1}^n \left(\langle \sigma_i,\sigma^{(i)}\rangle^2  - \left(1-\dfrac{N-1}{\beta}\right)\dfrac{(n-1)^2 b^2}{\beta^2}\right)\\
&\quad + \dfrac{4\beta^4}{n^4 b^4} \sum_{i=1}^n\sum_{j,k\ne i}\sigma_j^T R_i'\sigma_k,
\end{align*}
where 
\[R_i'= \left(\dfrac{f(b_i)}{b_i}- \dfrac{1}{\beta} \right)Id - \left(\dfrac{Nf(b_i)}{b_i}- \dfrac{N}{\beta} \right)P_i - \left(f(b_i)- \dfrac{b}{\beta} \right)(r_i\sigma_i^T+\sigma_i r_i^T),\]
and $P_i$ is orthogonal projection onto $r_i$. 
Therefore,
\begin{align} \label{eq:BB} 
\frac{1}{2\lambda}E((W_n-W_{n}^{'})^2|\sigma))-B^2
= \dfrac{2\beta^4}{n^3 b^4(1-\beta f'(b))} \left(R_2-\dfrac{2b}{\beta} R_3+R_4+R_5\right),
\end{align}
where 
\begin{align*}
& R_2 = \sum_{i=1}^n\left(1-\dfrac{N-1}{\beta}\right)\left(|\sigma^{(i)}|^2 - \dfrac{(n-1)^2 b^2}{\beta^2} \right), \\
& R_3 = \sum_{i=1}^n \left(|\sigma^{(i)}|\langle \sigma_i,\sigma^{(i)}\rangle - \dfrac{n^2 b^3}{\beta^3}\right),\\
& R_4 = \sum_{i=1}^n \left(\langle \sigma_i,\sigma^{(i)}\rangle^2 - \left(1-\dfrac{N-1}{\beta}\right)\dfrac{(n-1)^2 b^2}{\beta^2}\right), \\
& R_5 =  \sum_{i=1}^n\sum_{j,k\ne i}\sigma_j^T R_i'\sigma_k.
\end{align*}
For $R_2$, noting that $|\sigma^{(i)}-S_n| \le 1$, then by \lemref{le:lema1}, we have
\begin{align}\label{eq:R2a}
\left(E\Big|\dfrac{\beta|\sigma^{(i)}|}{n} - b\Big|\right)^2 \le E\Big|\dfrac{\beta|\sigma^{(i)}|}{n} - b\Big|^2 
\le E\Big|\dfrac{\beta|S_n|}{n} - b\Big|^2 +\dfrac{C}{n^2}
\le \dfrac{C}{n}.
\end{align}
Thus, 
\begin{align} \label{eq:R2final}
\begin{split}
E|R_2|  &\le C \sum_{i=1}^n E\left||\sigma^{(i)}|^2 - \dfrac{(n-1)^2b^2}{\beta^2} \right|  \\
	&\le C n^2 \sum_{i=1}^n\left( E\left|\dfrac{\beta^2|\sigma^{(i)}|^2}{n^2} - b^2 \right|+\dfrac{(2n-1)b^2}{n^2}\right) \\
	&\le C n^2 \left(\sum_{i=1}^n E\left|\dfrac{\beta|\sigma^{(i)}|}{n} - b \right| +C\right)\le Cn^{5/2}.
\end{split}
\end{align}
For $R_3$, 
we have
\begin{align}\label{eq:R3final}
\begin{split}
E|R_3| &=  E\left| \sum_{i=1}^n \left(|S_n|\langle \sigma_i,S_n\rangle - \dfrac{n^2 b^3}{\beta^3}
+|\sigma^{(i)}|\langle \sigma_i,\sigma^{(i)}\rangle-|S_n|\langle \sigma_i,S_n\rangle\right)\right|\\ 
 &\le E\left| |S_n|^3 - \dfrac{n^3 b^3}{\beta^3}\right|+E\left|\sum_{i=1}^n |\sigma^{(i)}|\langle \sigma_i,\sigma^{(i)}\rangle-|S_n|\langle \sigma_i,S_n\rangle\right|\\ 
 &\le Cn^2E\left| |S_n| - \dfrac{nb}{\beta}\right|+E\left|\sum_{i=1}^n \left(|\sigma^{(i)}|-|S_n|\right)\langle \sigma_i,\sigma^{(i)}\rangle-|S_n|\langle \sigma_i,\sigma_i\rangle\right|\\ 
 &\le Cn^3E\left| \dfrac{\beta|S_n|}{n}-b\right|+E\sum_{i=1}^n \left(|\langle \sigma_i,\sigma^{(i)}\rangle|+|S_n|\right)\\ 
 &\le Cn^3 E\left| \dfrac{\beta|S_n|}{n}-b\right|+C n^2\le C n^{5/2}.
\end{split}
\end{align}
To bound $E|R_5|$, we note that
\begin{align*}
&\sum_{i=1}^n\sum_{j,k\ne i}\sigma_j^T R_i'\sigma_k\\
& = \sum_{i=1}^n\sum_{j,k\ne i} \left[ \left(\dfrac{f(b_i)}{b_i}- \dfrac{1}{\beta} \right)\langle \sigma_j, \sigma_k\rangle  - \left(f(b_i)- \dfrac{b}{\beta} \right)\sigma_j^T(r_i\sigma_i^T+\sigma_i r_i^T)\sigma_k\right]\\
&\quad -\sum_{i=1}^n\sum_{j,k\ne i} \left(\dfrac{Nf(b_i)}{b_i}- \dfrac{N}{\beta} \right)\sigma_j^T P_i\sigma_k\\
&= \sum_{i=1}^n\left[\left(\dfrac{f(b_i)}{b_i}- \dfrac{1}{\beta} \right)|\sigma^{(i)}|^2-2\left(f(b_i)- \dfrac{b}{\beta} \right) |\sigma^{(i)}|\langle \sigma^{(i)},\sigma_i\rangle\right]\\
&\quad -\sum_{i=1}^n\left(\dfrac{Nf(b_i)}{b_i}- \dfrac{N}{\beta} \right)\sum_{j,k\ne i} \text{Trace}(\sigma_k\sigma_j^T r_i r_i^T)\\
&= \sum_{i=1}^n\left[\left(\dfrac{f(b_i)}{b_i}- \dfrac{1}{\beta} \right)|\sigma^{(i)}|^2-2\left(f(b_i)- \dfrac{b}{\beta} \right) |\sigma^{(i)}|\langle \sigma^{(i)},\sigma_i\rangle\right]\\
&\quad -\sum_{i=1}^n \left(\dfrac{Nf(b_i)}{b_i}- \dfrac{N}{\beta} \right)\langle \sigma^{(i)},r_i \rangle^2 \\
&= \sum_{i=1}^n(1-N)\left(\dfrac{f(b_i)}{b_i}- \dfrac{1}{\beta} \right)|\sigma^{(i)}|^2-2\sum_{i=1}^n\left(f(b_i)- \dfrac{b}{\beta} \right) |\sigma^{(i)}|\langle \sigma^{(i)},\sigma_i\rangle\\
&:=R_{51}-2R_{52}.
\end{align*}
Since $1/\beta=f(b)/b$ and $b_i=\beta |\sigma^{(i)}|/n$, we have
\begin{align}\label{eq:R51}
\begin{split}
E|R_{51}|&=E\left|\sum_{i=1}^n(1-N)\left(\dfrac{f(b_i)}{b_i}- \dfrac{f(b)}{b} \right)|\sigma^{(i)}|^2\right|\\
& \le Cn^2 \sum_{i=1}^n E|b_i-b| \mbox{ (by \lemref{le:boundf} (iii) and the fact that $|\sigma^{(i)}|\le n$)}\\
& \le Cn^2 \sum_{i=1}^n E\left(\left|\dfrac{\beta |S_n|}{n}-b\right|+\dfrac{\beta}{n}\left(|\sigma^{(i)}|-|S_n|\right)\right)\\
&\le Cn^{5/2}\mbox{ (by \eqref{eq:R2a} and the fact that $||\sigma^{(i)}|-|S_n||\le 1$)}.
\end{split}
\end{align}
Similarly,
\begin{align}\label{eq:R52}
\begin{split}
E|R_{52}|&=E\left|\sum_{i=1}^n(1-N)\left(f(b_i)- f(b) \right)|\sigma^{(i)}|\langle \sigma^{(i)},\sigma_i\rangle\right|\\
& \le Cn^2 \sum_{i=1}^n E|b_i-b| \mbox{ (by \lemref{le:boundf} (i) and the fact that $|\sigma^{(i)}|\le n$)}\\
&\le Cn^{5/2}.
\end{split}
\end{align}
Combining \eqref{eq:R51} and \eqref{eq:R52}, we have
\begin{align}\label{eq:R5final}
\begin{split}
E|R_{5}|\le Cn^{5/2}.
\end{split}
\end{align}

Bounding $E|R_4|$ is the most difficult part.
Here we follow a technique developed by Shao and Zhang \cite[Proof of (5.51)]{SZ}.
Set \[a=\left(1-\dfrac{N-1}{\beta}\right)\dfrac{(n-1)^2 b^2}{\beta^2},\ \sigma^{(1,2)}=S_n-\sigma_1-\sigma_2, V_1=\langle \sigma_1,\sigma^{(1,2)}\rangle^2,\ 
V_2=\langle \sigma_2,\sigma^{(1,2)}\rangle^2,\]
we have 
\[\left|\langle \sigma_1,\sigma^{(1)}\rangle^2-V_1\right|\le Cn,\ \left|\langle \sigma_1,\sigma^{(2)}\rangle^2-V_2\right|\le Cn.\]
It follows that
\begin{align}\label{eq:R4}
\begin{split}
ER_{4}^2&=nE\left(\langle \sigma_1,\sigma^{(1)}\rangle^2-a\right)^2-n(n-1)E\left(\langle \sigma_1,\sigma^{(1)}\rangle^2-a\right)\left(\langle \sigma_2,\sigma^{(2)}\rangle^2-a\right)\\
& \le Cn^5+n(n-1)\left|E\left(\langle \sigma_1,\sigma^{(1)}\rangle^2-V_1+V_1-a\right)\left(\langle \sigma_2,\sigma^{(2)}\rangle^2-V_2+V_2-a\right)\right|\\
& \le Cn^5+n(n-1)|E\left(V_1-a\right)\left(V_2-a\right)|\\
& \le Cn^5+n(n-1)|E\left(V_1-E(V_1|(\sigma_j)_{j>2})\right)\left(V_2-E(V_2|(\sigma_j)_{j>2})\right)|\\
& \quad +n(n-1)|E\left(E(V_1|(\sigma_j)_{j>2})-a\right)\left(E(V_2|(\sigma_j)_{j>2})-a\right)|\\
&:=Cn^5+n(n-1)(|R_{41}|+|R_{42}|).
\end{split}
\end{align}
Define a probability density function 
\begin{align} \label{eq:p12}
p_{12}(x,y)=\dfrac{1}{Z_{12}^2}\exp\left(\dfrac{\beta}{n}\langle x+y,\sigma^{(1,2)}\rangle\right), x,y\in {\mathbb{S}}^{N-1},
\end{align}
where $Z_{12}^2$ is the normalizing constant. Let $(\xi_1,\xi_2)\sim p_{12}(x,y)$ given $(\sigma_j)_{j>2}$, and for $i=1,2$
\begin{align}\label{eq:V}
{\tilde{V}}_i=E\left(\langle \xi_i,\sigma^{(1,2)}\rangle^2|(\sigma_j)_{j>2}\right).
\end{align}
Similar to Shao and Zhang \cite[pages 97, 98]{SZ},
we can show that
\begin{equation}\label{eq:R41a}
\begin{split}
R_{41}&=E\left(\langle \xi_i,\sigma^{(1,2)}\rangle^2-{\tilde{V}}_1\right)\left(\langle \xi_i,\sigma^{(1,2)}\rangle^2-{\tilde{V}}_2\right)+H_1,
\end{split}
\end{equation}
and
\begin{equation}\label{eq:R42a}
\begin{split}
R_{42}&=E\left({\tilde{V}}_1-a\right)\left({\tilde{V}}_2-a\right)+H_2,
\end{split}
\end{equation}
where $|H_1|\le Cn^3$ and $|H_2|\le Cn^3$.
Let 
\begin{align}\label{eq:B12}
b_{12}=\dfrac{\beta |\sigma^{(1,2)}|}{n}.
\end{align}
By \lemref{le:boundV} and the definition of $a$, we have
\begin{align*}
\begin{split}
&\left| {\tilde{V}}_1-a\right|=\left|\left(1 - \dfrac{(N-1)f(b_{12})}{b_{12}} \right)|\sigma^{(1,2)}|^2-\left(1-\dfrac{N-1}{\beta} \right)\dfrac{(n-1)^2b^2}{\beta^2}\right|\\
&\quad =\left|\left(1-\dfrac{N-1}{\beta} \right)\left(|\sigma^{(1,2)}|^2-\dfrac{(n-1)^2b^2}{\beta^2}\right)+(N-1)\left(\dfrac{1}{\beta}-\dfrac{f(b_{12})}{b_{12}}\right)|\sigma^{(1,2)}|^2\right|\\
&\quad \le Cn^2\left(\left|\dfrac{\beta^2|\sigma^{(1,2)}|^2}{n^2} -\dfrac{(n-1)^2b^2}{n^2}\right|+\left|\dfrac{f(b)}{b}-\dfrac{f(b_{12})}{b_{12}}\right|\right)\\
&\quad \le Cn^2\left(\left|\dfrac{\beta^2|S_n|^2}{n^2} -b^2\right|+\left|b_{12}-b\right|\right)+Cn\\
&\quad \le Cn^2\left(\left|\dfrac{\beta|S_n|}{n} -b\right|+\left|\dfrac{\beta|\sigma^{(1,2)}|}{n}-b\right|\right)+Cn
\le Cn^2\left|\dfrac{\beta|S_n|}{n} -b\right|+Cn.
\end{split}
\end{align*}
Using similar estimate for $\left| {\tilde{V}}_2-a\right|$, then we have
\begin{align}\label{eq:R42c}
\begin{split}
E\left|\left({\tilde{V}}_1-a\right)\left({\tilde{V}}_2-a\right)\right|&\le C\left(n^4E\left|\dfrac{\beta|S_n|}{n} -b\right|^2+n^3E\left|\dfrac{\beta|S_n|}{n} -b\right|+n^2\right)\\
&\le Cn^3\mbox{ (by \lemref{le:lema1})}.
\end{split}
\end{align}
Note that given $(\sigma_j)_{j>2}$, $\xi_1$ and $\xi_2$ are conditionally independent.  It implies that
\begin{equation}\label{eq:R41b}
\begin{split}
E\left(\langle \xi_i,\sigma^{(1,2)}\rangle^2-{\tilde{V}}_1\right)\left(\langle \xi_i,\sigma^{(1,2)}\rangle^2-{\tilde{V}}_2\right)=0.
\end{split}
\end{equation}
Combining \eqref{eq:R4}-\eqref{eq:R41b}, we have $ER_{4}^2\le Cn^5$, and so
\begin{equation}\label{eq:R4final}
\begin{split}
E|R_4|\le Cn^{5/2}.
\end{split}
\end{equation}
Combining \eqref{eq:BB}, \eqref{eq:R2final}, \eqref{eq:R3final}, \eqref{eq:R5final} and \eqref{eq:R4final}, we have
\begin{align*} 
E\left|\dfrac{1}{2\lambda}E((W_n-W_{n}^{'})^2|W_n))-B^2 \right| \le C n^{-1/2}.
\end{align*}
The proposition is proved.
\end{proof}

\appendix
\section{Appendix}\label{sec:Appendix}

In this Section, we will prove the technical results that used in the proof of \thmref{th:app2}.
\begin{lem}\label{le:lema1} We have
\[E\Big|\dfrac{\beta|S_n|}{n} - b\Big|^2 \le \dfrac{C}{n}.\]
\end{lem}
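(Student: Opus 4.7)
The plan is to deduce the bound directly from the large deviation estimate for $\beta|S_n|/n$ attributed to Kirkpatrick and Nawaz in the introduction, namely that there exist constants $\varepsilon>0$ and $C>0$ such that
\[
P\!\left(\left|\dfrac{\beta|S_n|}{n}-b\right|\ge x\right)\le e^{-Cnx^{2}}\quad\text{for all }0\le x\le\varepsilon.
\]
Since each spin $\sigma_i$ lies in $\mathbb{S}^{N-1}$, one has $|S_n|\le n$, so the random variable $\bigl|\beta|S_n|/n-b\bigr|$ is bounded deterministically by the constant $M:=\beta+b$.

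I would then write the second moment through the layer-cake representation
\[
E\!\left|\dfrac{\beta|S_n|}{n}-b\right|^{2}
=\int_{0}^{M}2x\,P\!\left(\left|\dfrac{\beta|S_n|}{n}-b\right|>x\right)dx,
\]
and split the integral at the threshold $\varepsilon$. On $[0,\varepsilon]$ the large deviation bound yields
\[
\int_{0}^{\varepsilon}2x\,e^{-Cnx^{2}}\,dx
\le \int_{0}^{\infty}2x\,e^{-Cnx^{2}}\,dx=\dfrac{1}{Cn}.
\]
On $[\varepsilon,M]$, monotonicity of the tail together with the bound at $x=\varepsilon$ gives
\[
\int_{\varepsilon}^{M}2x\,P\!\left(\left|\dfrac{\beta|S_n|}{n}-b\right|>x\right)dx
\le M^{2}e^{-Cn\varepsilon^{2}},
\]
and this exponentially small quantity is of course dominated by a constant multiple of $1/n$. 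Adding the two contributions delivers the desired estimate $E\bigl|\beta|S_n|/n-b\bigr|^{2}\le C/n$.

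In truth there is no real obstacle here, since the whole argument is a standard conversion of a sub-Gaussian tail bound into a second-moment bound; the only thing one must be careful about is that the large deviation estimate from \cite{KN} is only valid for $0\le x\le\varepsilon$, which is why the range of integration has to be split and the tail beyond $\varepsilon$ absorbed by the trivial uniform bound $|S_n|\le n$. If a self-contained derivation of the large deviation inequality were desired, the heavier route would be to couple $|S_n|/n$ to the unique minimizer $b/\beta$ of the associated rate function (obtainable from the Hubbard–Stratonovich transform of the Gibbs weight $\exp(\beta|S_n|^{2}/(2n))$) and to use strict convexity of the rate function near $b/\beta$ to get quadratic decay; but for the lemma as stated, the short extraction from the already-cited tail bound suffices.
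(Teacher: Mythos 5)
Your proof is correct and follows essentially the same route as the paper: invoke the sub-Gaussian tail bound from Kirkpatrick--Nawaz, split the second moment at the threshold $\varepsilon$, bound the near range by the Gaussian integral $\int_0^\infty 2xe^{-Cnx^2}\,dx=1/(Cn)$, and absorb the far range by the exponentially small tail probability together with the deterministic bound $|S_n|\le n$. The only cosmetic difference is that the paper writes the far-range contribution as $E\bigl(|\cdot|^2\,I(|\cdot|>\varepsilon)\bigr)\le C\,P(|\cdot|>\varepsilon)$ while you keep the layer-cake integral over $[\varepsilon,M]$; the two are equivalent.
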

\begin{proof}
By the large deviation for $S_n/n$ \cite[Proposition 2]{KN} and the argument in \cite[p. 1126]{KN}, one can prove that
there exists $\varepsilon>0$ such that
\[P\left( \left| \dfrac{\beta|S_n|}{n} - b\right| \ge x \right) \le e^{-Cn {x}^2}\]
for all $0\le x\le \varepsilon$. Since 
$\left|\dfrac{\beta|S_n|}{n} - b\right|\le C,$
it implies that
\begin{align*} 
\begin{split}
E\left|\dfrac{\beta|S_n|}{n} - b\right|^2 &\le 2\int_0^{\varepsilon} x P\left(\left|\dfrac{\beta|S_n|}{n} - b\right|> x\right) dx\\
&\quad +E\left(\left|\dfrac{\beta|S_n|}{n} - b\right|^2 I\left(\left|\dfrac{\beta|S_n|}{n} - b\right|> \varepsilon\right)\right)\\
&\le 2 \int_0^{\varepsilon}xe^{-Cnx^2}dx + C P\left(\left|\dfrac{\beta|S_n|}{n} - b\right|> \varepsilon\right)\\
&\le \dfrac{C}{n} + C e^{-C n\varepsilon^2}
\le \dfrac{C}{n}.
\end{split}
\end{align*}
\end{proof}
\begin{lem}\label{le:boundf}
Let $x>0$ and $f(x) = \dfrac{I_{N/2}(x)}{I_{N/2-1}(x)}$. Then
the following statements hold:
\begin{itemize}
\item[(i)] $0 < f'(x) < \dfrac{1}{N-1}\le 1.$
\item[(ii)] $|(xf(x))''|< 6.$
\item[(iii)] $-5\le \dfrac{-5}{N-1}< \left(\dfrac{f(x)}{x}\right)'<0.$
\end{itemize}
\end{lem}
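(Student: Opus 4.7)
The plan is to base everything on the Riccati-type identity
\[ f'(x) = 1 - f(x)^2 - \frac{N-1}{x} f(x), \]
which follows from the standard recurrences $I_\nu'(x) = I_{\nu-1}(x) - (\nu/x) I_\nu(x)$ and $I_{\nu-1}'(x) = I_\nu(x) + ((\nu-1)/x) I_{\nu-1}(x)$ via the quotient rule applied to $f = I_{N/2}/I_{N/2-1}$. I will also take as inputs two classical Bessel facts: $0 < f(x) < 1$ on $(0,\infty)$, and the strict Tur\'an inequality $I_{N/2}(x)^2 > I_{N/2-1}(x) I_{N/2+1}(x)$ for $N \ge 2$ and $x > 0$.

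I would prove the upper bound in (iii) first. Writing $f(x)/x = \phi(x)/\psi(x)$ with $\phi(x) = x^{-N/2} I_{N/2}(x)$ and $\psi(x) = x^{1-N/2} I_{N/2-1}(x)$, and using $(x^{-\nu} I_\nu(x))' = x^{-\nu} I_{\nu+1}(x)$, a short computation shows $(f/x)'$ equals a positive multiple of $I_{N/2+1}(x) I_{N/2-1}(x) - I_{N/2}(x)^2$, which is negative by Tur\'an; hence $(f/x)' < 0$. Part (i) then follows: the lower bound $f'(x) > 0$ is the classical monotonicity of $I_\nu/I_{\nu-1}$ on $(0,\infty)$ (equivalently, via the Riccati identity, the Amos-type inequality $f^2 + (N-1)f/x < 1$), while for the upper bound, $(f/x)' < 0$ gives $xf'(x) < f(x)$, and combined with the limit $\lim_{t \to 0^+} f(t)/t = 1/N$ from \eqref{eq:add02} we get
\[ f'(x) < \frac{f(x)}{x} < \frac{1}{N} \le \frac{1}{N-1}. \]
For (ii), substituting $xf'(x) = x - xf(x)^2 - (N-1)f(x)$ into $(xf)'(x) = f(x) + xf'(x)$ yields $(xf)'(x) = x - (N-2)f(x) - xf(x)^2$, and one further differentiation gives
\[ (xf)''(x) = 1 - (N-2)f'(x) - f(x)^2 - 2xf(x)f'(x). \]
By (i), $0 \le (N-2)f'(x) < 1$; by $f < 1$, $0 < f(x)^2 < 1$; and using $f'(x) < f(x)/x$ from (iii), $2xf(x)f'(x) < 2f(x)^2 < 2$. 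Summing, $|(xf)''(x)| < 3 < 6$.

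The hard part will be the lower bound in (iii): the naive estimate $|xh'(x)| = |1 - f^2 - Nf/x| \le 1$ (with $h(x) = f(x)/x$) only yields $|h'(x)| \le 1/x$, which is not uniform near $x = 0$. To get around this I set $K(x) = f(x)^2 + Nf(x)/x - 1$, so that the Riccati identity gives $K(x) = -x h'(x) \ge 0$ with $K(0^+) = 0$, and differentiating once more (using Riccati again to rewrite $Nh'$) produces the linear first-order ODE
\[ K'(x) + \frac{N K(x)}{x} = 2 f(x) f'(x). \]
Multiplying by the integrating factor $x^N$ and integrating from $0$ to $x$,
\[ K(x) = \frac{2}{x^N} \int_0^x t^N f(t) f'(t)\, dt. \]
Using $f \le 1$ and $f'(t) \le f(t)/t < 1/N$ (both already established above), the integrand is bounded by $t^N/N$, so
\[ -h'(x) = \frac{K(x)}{x} \le \frac{2}{N(N+1)} < \frac{5}{N-1} \]
for every $N \ge 2$, giving $(f(x)/x)' > -5/(N-1)$ and completing the proof.
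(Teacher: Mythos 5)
Your proof is correct, and for the hard part of (iii) it takes a genuinely different route from the paper. Both arguments start from the same Riccati identity $f'=1-\tfrac{N-1}{x}f-f^2$. For the upper bound in (iii) and part (i), the paper cites Amos's inequality $0<f'<f/x$, while you reach $(f/x)'<0$ via the Tur\'an inequality $I_{N/2}^2>I_{N/2-1}I_{N/2+1}$ written as a Wronskian of $x^{-\nu}I_\nu$ factors; these two classical facts are essentially equivalent, so this is a cosmetic difference. Part (ii) you handle by the cleaner rearrangement $(xf)''=1-(N-2)f'-f^2-2xff'$, which actually gives $|(xf)''|<3$ rather than the paper's $4+2f^2\le 6$. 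The real divergence is the lower bound in (iii): the paper invokes Theorem 2(a) of N\aa sell to control $\tfrac{1}{x}\bigl(1-\tfrac{Nf}{x}\bigr)$, whereas you observe that $K:=-x(f/x)'=f^2+Nf/x-1\ge 0$ satisfies the linear ODE $K'+\tfrac{N}{x}K=2ff'$ (both $K'=2ff'+Nh'$ and $Nh'=-NK/x$ follow directly), integrate with the factor $x^N$, and bound the integrand by $t^N/N$ using $f\le 1$ and $f'<f/x<1/N$. This yields $-(f/x)'\le \tfrac{2}{N(N+1)}$, which is sharper than $\tfrac{5}{N-1}$ for every $N\ge 2$ (since $2(N-1)<5N(N+1)$). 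The ODE route is self-contained, avoids the external rational-bound reference, and produces constants that are closer to optimal; its only cost is the need to check the boundary term $\epsilon^N K(\epsilon)\to 0$, which is immediate from $K(0^+)=0$.
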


\begin{proof}
As was showed in \cite[p. 1134]{KN}, we have 
\begin{align} \label{eq:ine01}
f'(x) = 1 - \dfrac{N-1}{x}f(x) - f^2(x).
\end{align}
It implies
\begin{align} \label{eq:ine01a}
\dfrac{f(x)}{x}=\dfrac{1-f'(x)-f^2(x)}{N-1},
\end{align}
and
\begin{align} \label{eq:ine01b}
f^2(x) = 1 - \dfrac{N-1}{x}f(x) - f'(x).
\end{align}
Amos \cite[p. 243]{Amos} proved that
\begin{align} \label{eq:ine02}
0 < f'(x) < \dfrac{f(x)}{x}.
\end{align}
Combining \eqref{eq:ine01a}-\eqref{eq:ine02}, we have
\begin{align} \label{eq:ine03a}
0< f'(x) < \dfrac{f(x)}{x}<\dfrac{1}{N-1},
\mbox{
and
}
f^2(x)<1.
\end{align} 
Therefore,
\begin{align*}
\left|(xf(x))''\right| &= \left|2f'(x) + xf''(x)\right|\\
&= \left|2f'(x) + x \left(-f'(x)\left(\dfrac{N-1}{x}+2f(x) \right)+ \dfrac{N-1}{x^2}f(x) \right) \right|\\
&\le 2 + (N-1)f'(x)+ 2xf(x)f'(x)+\dfrac{(N-1)f(x)}{x}\\
&\le 4 + 2f^2(x)\mbox{ (by the first half of \eqref{eq:ine03a})}\\
&\le 6 \mbox{ (by the second half of \eqref{eq:ine03a})}.
\end{align*}
The proof of (i) and (ii) is completed. For (iii), we have 
\begin{align}\label{eq:ine05}
\left(\dfrac{f(x)}{x}\right)' &= \dfrac{1}{x}\left(f'(x) -\dfrac{f(x)}{x}\right).
\end{align}
Combining the first half of \eqref{eq:ine03a} and \eqref{eq:ine05}, we have $\left(\dfrac{f(x)}{x}\right)' < 0$.
It follows from  \eqref{eq:ine01}, \eqref{eq:ine03a} and \eqref{eq:ine05} that
\begin{align}\label{eq:ine07}
\left(\dfrac{f(x)}{x}\right)' &=\dfrac{1}{x}\left(1 -\dfrac{Nf(x)}{x}\right)-\dfrac{f^2(x)}{x}>\dfrac{1}{x}\left(1 -\dfrac{Nf(x)}{x}\right)-\dfrac{1}{N-1}.
\end{align}
Apply Theorem 2 (a) of N\.{a}sell \cite{Na}, we can show that
\begin{align}\label{eq:ine09}
\dfrac{1}{x}\left(1-\dfrac{Nf(x)}{x}\right)>\dfrac{-4}{N-1}.
\end{align}
Combining \eqref{eq:ine07} and \eqref{eq:ine09}, we have
$\left(\dfrac{f(x)}{x}\right)' >\dfrac{-5}{N-1}.$
The proof of (iii) is completed.
\end{proof}

\begin{lem} \label{le:boundV}
With the notation in the proof of \thmref{th:app2}, we have
\[\tilde{V}_i= |\sigma^{(1,2)}|^2\left(1 - \dfrac{(N-1)f(b_{12})}{b_{12}} \right),\ i=1,2.\]

\end{lem}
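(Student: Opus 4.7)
The plan is to use the factorization $p_{12}(x,y) \propto e^{(\beta/n)\langle x,\sigma^{(1,2)}\rangle}\, e^{(\beta/n)\langle y,\sigma^{(1,2)}\rangle}$, so that given $(\sigma_j)_{j>2}$ the variables $\xi_1,\xi_2$ are conditionally independent, each with density on $\mathbb{S}^{N-1}$ proportional to $\exp(\kappa \langle x,\hat v\rangle)$, where $\kappa := b_{12} = \beta r/n$, $r := |\sigma^{(1,2)}|$ and $\hat v := \sigma^{(1,2)}/r$ (all measurable with respect to $(\sigma_j)_{j>2}$). Setting $T := \langle \xi_i,\hat v\rangle$, we have $\langle \xi_i,\sigma^{(1,2)}\rangle^2 = r^2 T^2$, so
\[\tilde V_i = r^2\, E\bigl(T^2 \mid (\sigma_j)_{j>2}\bigr);\]
it therefore suffices to show $E(T^2\mid(\sigma_j)_{j>2}) = 1 - (N-1)f(b_{12})/b_{12}$, which will cover both $i=1$ and $i=2$ by symmetry.

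Next I would compute the log-partition function $Z(\kappa) := \int_{\mathbb{S}^{N-1}} e^{\kappa\langle x,\hat v\rangle}\,d\mu(x)$. Rotational invariance of $\mu$ reduces this to a one-dimensional integral in $t=\langle x,\hat v\rangle$ with weight proportional to $(1-t^2)^{(N-3)/2}$, and the standard integral representation of modified Bessel functions then yields $Z(\kappa) = \Gamma(N/2)(2/\kappa)^{N/2-1} I_{N/2-1}(\kappa)$. Combined with the recurrence $I'_\nu(\kappa) = I_{\nu+1}(\kappa) + (\nu/\kappa)I_\nu(\kappa)$, this gives $\frac{d}{d\kappa}\log Z(\kappa) = I_{N/2}(\kappa)/I_{N/2-1}(\kappa) = f(\kappa)$. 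Exponential-family calculus then yields $E(T\mid(\sigma_j)_{j>2}) = f(\kappa)$ and $\mathrm{Var}(T\mid(\sigma_j)_{j>2}) = f'(\kappa)$, whence $E(T^2\mid(\sigma_j)_{j>2}) = f'(\kappa) + f(\kappa)^2$.

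The final step is to invoke identity \eqref{eq:ine01b} from the proof of \lemref{le:boundf}, which states $f^2(\kappa) = 1 - (N-1)f(\kappa)/\kappa - f'(\kappa)$, so $f'(\kappa) + f^2(\kappa) = 1 - (N-1)f(\kappa)/\kappa$; substituting back gives $\tilde V_i = r^2\bigl(1 - (N-1)f(b_{12})/b_{12}\bigr)$, as claimed. The only mildly technical step is the Bessel-function bookkeeping in computing $Z(\kappa)$ and identifying $\frac{d}{d\kappa}\log Z(\kappa)$ with $f(\kappa)$; everything else is routine exponential-family differentiation plus a single substitution of \eqref{eq:ine01b}.
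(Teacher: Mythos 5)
Your proposal is correct, and it takes a genuinely different route from the paper. The paper evaluates $\tilde V_i$ by direct integration: after reducing to a one-dimensional integral in $\varphi$, it uses $\cos^2\varphi = 1 - \sin^2\varphi$ to split the numerator into a piece that cancels against $Z_{12}$ and a piece that the Bessel integral representation identifies as $I_{N/2}(b_{12})$, then reassembles the constants to get $1 - (N-1)f(b_{12})/b_{12}$. You instead exploit the exponential-family structure of the tilted measure: the cumulant generating function $\log Z(\kappa)$ has first derivative $f(\kappa)$ (via the recurrence $I'_\nu = I_{\nu+1} + (\nu/\kappa)I_\nu$) and second derivative $f'(\kappa)$, so the conditional second moment of $T = \langle\xi_i,\hat v\rangle$ is $f'(\kappa) + f(\kappa)^2$, which identity \eqref{eq:ine01b} rewrites as $1 - (N-1)f(\kappa)/\kappa$. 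Your route buys conceptual clarity — it reveals the lemma's right-hand side as $f'+f^2$ in disguise, connects it to the Bessel ODE identity already proved in \lemref{le:boundf}, and reuses standard exponential-family calculus rather than an ad hoc trigonometric splitting; the paper's route buys self-containment, since it is a single explicit integration needing no differentiation under the integral sign and no appeal to \lemref{le:boundf}. Both are complete and correct.
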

\begin{proof}
Let $A_N=2\pi^{N/2}/\Gamma(N/2)$ the Lebesgue measure of $S^{N-1}$. It follows from \eqref{eq:p12} that
\begin{align*}
Z_{12}^2&=\int_{S^{N-1}}\int_{S^{N-1}}\exp\left(\dfrac{\beta}{n}\langle x+y,\sigma^{(1,2)} \rangle\right)d\mu(x)d\mu(y)\\
&=\left(\int_{S^{N-1}}\exp\left(\dfrac{\beta}{n}\langle x,\sigma^{(1,2)} \rangle\right)d\mu(x)\right)^2\\ 
&=\left(\dfrac{A_{N-1}}{A_N}\int_0^{\pi}e^{b_{12}\cos \varphi_{N-2}} \sin^{N-2}\varphi_{N-2}d\varphi_{N-2}\right)^2\\
&=\left(\dfrac{A_{N-1}}{A_N} \dfrac{\sqrt{\pi} \Gamma(N/2-1/2)}{\left(b_{12}/2\right)^{N/2-1}}I_{N/2-1}(b_{12})\right)^2,
\end{align*}
where we have used formula
\[I_{\nu}(z)=\dfrac{1}{\sqrt{\pi}\Gamma(\nu+1/2)}\left(\dfrac{\nu}{2}\right)^{\nu}\int_{0}^{\pi}\exp(z\cos \theta)\sin^{2\nu}\theta d\theta\]
(see, e.g., Exercise 11.5.4 in \cite{AW}) in the last equation. For $i=1,2$, we have
\begin{align*}
\tilde{V}_i &= \dfrac{1}{Z_{12}}\int_{\mathbb{S}^{N-1}} \langle\theta, \sigma^{(1,2)}\rangle^2 \exp\left[ \dfrac{\beta}{n} \langle \theta, \sigma^{(1,2)}\rangle\right]d\mu(\theta)\\
&= \dfrac{1}{Z_{12}}\int_{\mathbb{S}^{N-1}} |\sigma^{(1,2)}|^2\left\langle\theta, \dfrac{\sigma^{(1,2)}}{|\sigma^{(1,2)}|}\right\rangle^2 \exp\left( \dfrac{\beta|\sigma^{(1,2)}|}{n} \left\langle\theta, \dfrac{\sigma^{(1,2)}}{|\sigma^{(1,2)}|}\right\rangle\right)d\mu(\theta) \\
&= |\sigma^{(1,2)}|^2\dfrac{A_{N-1}}{A_N Z_{12}}\int_0^{\pi} \cos^2\varphi_{N-2}\sin^{N-2}\varphi_{N-2}e^{b_{12}\cos\varphi_{N-2}} d\varphi_{N-2}\\
&= |\sigma^{(1,2)}|^2\dfrac{A_{N-1}}{A_N Z_{12}}\int_0^{\pi}e^{b_{12} \cos\varphi_{N-2}}\sin^{N-2}\varphi_{N-2} d\varphi_{N-2}\\
&\quad -\int_{0}^{\pi}e^{b_{12}\cos\varphi_{N-2}}\sin^{N}\varphi_{N-2} d\varphi_{N-2}\\
&= \left(1-\dfrac{A_{N-1}}{A_N Z_{12}}\int_{0}^{\pi}e^{b_{12}\cos\varphi_{N-2}}\sin^{N}\varphi_{N-2} d\varphi_{N-2} \right)|\sigma^{(1,2)}|^2\\
&= \left(1-\dfrac{A_{N-1}}{A_N Z_{12}}\dfrac{\sqrt{\pi} \Gamma(N/2+1/2)}{\left(b_{12}/2\right)^{N/2}}I_{N/2}(b_{12}) \right)|\sigma^{(1,2)}|^2\\
&= \left(1 - \dfrac{(N-1)f(b_{12})}{b_{12}} \right)|\sigma^{(1,2)}|^2.
\end{align*}
\end{proof}	
Finally, we would like to note again that \propref{pr:EL} is a special case of
Theorem 2.4 of Eichelsbacher and L\"{o}we \cite{EL} or Theorem 13.1 in \cite{CGS}, but the constants
in the bound may be different from those of Theorem 2.4 in \cite{EL} or Theorem 13.1 in \cite{CGS}. Since the proof is short and simple, we will present here.
\begin{proof}[Proof of the \propref{pr:EL}]
Let $h: \mathbb{R}\to\mathbb{R}$ such that $\|h'\|\le 1$ and $E|h(Z)|<\infty$, and 
let $f:=f_h$ be the unique solution to the Stein's equation
$f'(w)-wf(w)=h(w)-Eh(Z).$
Since $(W,W^{'})$ is an exchangeable pair and $E(W-W'|W) = \lambda(W + R)$,
\begin{align*}
 0&=E(W-W')(f(W)+f(W'))\\
&=E(W-W')(f(W')-f(W))+2Ef(W)(W-W')\\
&=E(W-W')(f(W')-f(W))+2\lambda Ef(W)E(W-W'|W)\\
&=E\Delta(f(W')-f(W))+2\lambda EWf(W)+2\lambda Ef(W)R.
\end{align*}
It thus follows that
\begin{align}\label{eq:Kanto}
\begin{split}
&|Eh(W)-Eh(Z)|\\
&=|E(f'(W)-Wf(W))|\\
&=\left|E\left( f'(W)+\dfrac{1}{2\lambda}E\Delta(f(W')-f(W))+Ef(W)R\right)\right|\\
&= \left|E\left(f'(W)\left(1-\dfrac{1}{2\lambda}E(\Delta^2|W)\right)+\dfrac{1}{2\lambda}\Delta\left(f(W')-f(W)+\Delta f'(W)\right)+f(W)R\right)\right|\\
&\le \|f'\| E\left|1-\dfrac{1}{2\lambda}E(\Delta^2|W)\right|+\dfrac{1}{4\lambda}\|f''\|E\left|\Delta\right|^3+\|f\|E|R|.
\end{split}
\end{align}
By Lemma 2.4 in \cite{CGS} we have
\begin{align}\label{eq:boundCGS}
\|f\|\le 2,\ \|f'\|\le \sqrt{2/\pi},\ \|f''\|\le 2.
\end{align}
The conclusion of the proposition follows from \eqref{eq:Kanto} and \eqref{eq:boundCGS}.
\end{proof}

\ACKNO{We are grateful to the Associate Editor and an anonymous referee whose comments have been very
useful to make our development clearer and more comprehensive.}

\end{document}